\newtheorem{theorem}{Theorem}
\newtheorem{corollary}[theorem]{Corollary}
\newtheorem{remark}[theorem]{Remark}
\journal{Journal of \LaTeX\ Templates}
\begin{document}

\begin{frontmatter}

\title{A sufficient condition for local nonnegativity\tnoteref{mytitlenote}}
\tnotetext[mytitlenote]{This work was partially supported by National Nature Science Foundation (61673016), the Fundamental Research Funds for the Central Universities, SWUN (2017NZYQN42), Innovative Research Team of the Education Department of Sichuan Province (15TD0050), Sichuan Youth Science and Technology Innovation Research Team (2017TD0028).}

\author[jx]{Jia Xu\corref{cor1}}
\ead{xufine@163.com}

\author[yy]{Yong Yao}
\ead{yaoyong@casit.ac.cn}

\cortext[cor1]{Corresponding author.}

\address[jx]{School of Computer Science and Technology,
Southwest Minzu University, Chengdu, Sichuan 610041,
China}

\address[yy]{Chengdu Institute of Computer Applications, Chinese
Academy of Sciences, Chengdu, Sichuan 610041, China}

\begin{abstract}
A real polynomial $f$ is called local nonnegative at a point $p$, if it is nonnegative in a
neighbourhood of $p$. In this paper, a sufficient condition for determining this property is constructed.
Newton's principal part of $f$ (denoted as $f_N$) plays a key role in this process. We proved that
if every $F$-face, $(f_N)_F$, of $f_N$ is strictly positive over $(\mathbb{R}\setminus 0)^n$, then
$f$ is local nonnegative at the origin $O$.
\end{abstract}

\begin{keyword}
local nonnegativity\sep real polynomial \sep Newton's principal part
\MSC[2010] 00-01\sep  99-00
\end{keyword}

\end{frontmatter}


\section{Introduction}

Before begin, several notations will be given.
Let $\mathbb{N},\ \mathbb{R},\ \mathbb{R}_{>0},\ \mathbb{R}_{\geq 0}$ denote the set of all natural numbers,
real numbers, positive real numbers and nonnegative real numbers, respectively. Let us start with a question.

What is local nonnegative?  Here is an explicit definition.

Let $f\in \mathbb{R}[x_1,x_2,\cdots,x_n]$ be a real polynomial.
Consider the point $p=(p_1,\cdots,p_n)\in \mathbb{R}^n$. If there is a $p$'s neighbourhood
$B(\epsilon,p)=\{x|\ \parallel x-p \parallel < \epsilon \}$ such that
$$\forall x\in B(\epsilon,p),\  f(x)\geq 0,$$
then $f$ is called local nonnegative at $p$.

There is a variety of  mathematical problems related to local nonnegativity. For instance, the problem of determining
the local minimum of a polynomial, finding the isolated singular point of a real algebraic variety, constructing Lyapunov
functions, and proving algebra inequalities.

So, what kind of function is local nonnegative at a point?

This problem will be discussed in two cases. Pick a point $p\in \mathbb{R}^n$, and let $f$ be a real function.
Case 1: $f(p)>0$. It is a trivial situation. Since a familiar result said that
if $f$ is continuous in an open set $D\in \mathbb{R}^n$,
and there is a point $p\in D$ such that $f(p)>0$, then, there exists a neighbourhood of $p$ such that $f$ is still positive
in the neighbourhood. Thus, it is obvious that $f$ is local nonnegative at the point $p$.
Case 2: $f(p)=0$. It becomes a tricky problem.
To facilitate discussion, we will assume $f$ to be a polynomial, and denote $O=(0,\ldots,0)$ as the origin.
 By coordinate transformation, the problem that whether $f$ is local nonnegative
at an arbitrary point can be reduced to determining whether it is local nonnegative at the origin.
Hence, throughout the paper, the local nonnegativity of $f$ means that it is local nonnegative at the origin.
Then, there is a renowned result about Hessian matrix in the standard textbooks (\cite{Apo},\cite{Zor},\cite{CS}),  which may
come in handy.

\newtheorem{th:1}{Theorem}
\begin{th:1}\upshape\label{th:1}
Let $\ D\subseteq \mathbb{R}^n$ be an open set. Assume that all second-order partial derivatives of
 $f: D\rightarrow\mathbb{R}$ exist and are continuous at the origin $O$ , and assume further that
 $$f(O)=0,\ \frac{\partial f}{\partial x_i}(O)=0, \ 1\leq i \leq n.$$
 If Hessian matrix
 $$ H_f(O)=\left (\frac{\partial^2 f}{\partial x_i \partial x_j}(O)  \right )_{1\leq i,j\leq n} $$
 is positive definite, then $f$ is local nonnegative (and $f$ has no other zeros neighbouring $O$).
\end{th:1}

Unfortunately, this law will failed if Hessian matrix is not positive definite. Here is an example that Hessian matrix is a singular matrix
(i.e., the determinant of Hessian matrix is $0$ ), whereas the polynomial is local nonnegative.

\newtheorem{ex:1}{Example}
\begin{ex:1}\upshape\label{ex:1}
For an arbitrary real number $s$, the polynomial
$$f=x^2+y^4+z^6-sxy^2z^3,\ f\in \mathbb{R}[x,y,z] .$$
is local nonnegative.
\end{ex:1}

Roughly speaking, if Hessian matrix is singular, then it is tough to determine local nonnegativity.

However, it is easy to find a necessary condition of local nonnegativity:

Let $\varphi_1(t),\allowbreak  \cdots, \varphi_n(t)\in \mathbb{R}[t]$ be $n$ arbitrary polynomials in a variable.
Then the polynomial $f$ is local nonnegative only if,
$f(\varphi_1(t), \cdots, \varphi_n(t))\equiv 0$, or
the degree of the leading term (i.e. the term with lowest degree ) of $f(\varphi_1(t), \cdots, \varphi_n(t))$ is even
and its leading coefficient (i.e. the coefficient of leading term ) is positive.

This result can be used to prove that a polynomial is not local nonnegative. For instance, if there exist
polynomials $\varphi_1(t),\allowbreak  \cdots, \varphi_n(t)\in \mathbb{R}[t]$,
such that the leading coefficient of~$f(\varphi_1(t), \cdots, \varphi_n(t))$ is negative,
then $f$ is not local nonnegative. Here is an example.

\newtheorem{ex:2}[ex:1]{Example}
\begin{ex:2}\upshape\label{ex:2}
Consider the polynomial
$$f=x^{16}+y^{18}-x^7y^3+x^{12}y^{15}+x^4y^2-2x^3y^3+y^4x^2.$$
Pick $x=t,\ y=t$, the above polynomial can written as
$$f(t,t)=t^{27}+t^{18}+t^{16}-t^{10}=t^{10}(-1+t^6+t^8+t^{17}).$$
It is obvious that the coefficient of leading term is $-1$. Then, $f(t,t)<0$ when $t\rightarrow 0$.
Thus, $f$ is not local nonnegative.
\end{ex:2}

In contrast, it is a hard problem to obtain a sufficient condition of local nonnegativity.
We hardly get any other result about the sufficient condition except the Theorem \ref{th:1}.
The aim of this paper is to construct an available sufficient condition for determining the local nonnegativity.

\section{Preliminaries}

Let $x^{\alpha}=x_1^{\alpha_1}\cdots x_n^{\alpha_n}$ with $\alpha=(\alpha_1,\ldots,\alpha_n)\in
\mathbb{N}^n$. Then, a polynomial $f\in \mathbb{R}[x_1,\cdots,x_n]$  can be written as $$f=\sum_{\alpha}a_{\alpha}x^{\alpha}. $$
The support of $f$ is defined as $\mathrm{Sup}(f)=\{\alpha\in \mathbb{N}^n|\ a_{\alpha}\neq 0 \}$, and let
$$\mathrm{Sup}^+(f)=\{\alpha\in \mathbb{N}^n|\ a_{\alpha}> 0 \}, \quad \mathrm{Sup}^-(f)=\{\alpha\in \mathbb{N}^n|\ a_{\alpha}< 0 \}.$$
Here $\mathrm{Sup}^+(f)$ and $\mathrm{Sup}^-(f)$ stand for the positive and negative support of $f$ respectively.
Denote
$$f^+=\sum_{\alpha\in \mathrm{Sup}^+(f)}a_{\alpha}x^{\alpha},\
f^-=\sum_{\alpha\in \mathrm{Sup}^-(f)}a_{\alpha}x^{\alpha}.$$

Let $N(f)$ be the Newton polytope of $f$. It is the convex hull of $\mathrm{Sup}(f)$, that is,
$$N(f)=\mathrm{Conv}\{ \alpha\in \mathrm{Sup}(f)\}.$$
Let $\mathcal{F}(f)$ be a set of all the nonempty faces of $N(f)$. For $F\in \mathcal{F}(f)$, the $F$-face of the polynomial $f$  is
$$f_F=\sum_{{\alpha}\in (F\bigcap \mathrm{Sup}(f))} a_{\alpha}x^{\alpha}.$$
Noticed that $f_F=f$ when $F=N(f)$.

\newtheorem{def:b10}{Definition}[section]
\begin{def:b10}\upshape\label{def:b10}
Let $V$ denote the set of vertices of $N(f)$. Define
$$f^V=\sum_{\upsilon\in V} x^{\upsilon}.$$
$f^V$ is called the characteristic polynomial of vertices of $f$.
\end{def:b10}

To study the local nonnegativity of polynomials, we need to present the definition of Newton's principal part.
For a polynomial $f$, define set of points
\begin{equation}\label{eq:b10}
  N_f=\mathrm{Conv}(\bigcup_{\alpha\in \mathrm{Sup}(f)} (\alpha + \mathbb{R}^n_{\geq 0})).
\end{equation}
The compact face of $N_f$ is denoted as $\Gamma_f$, which is called the Newton's diagram of $f$.
Then, Newton's principal part is defined as follows.

\newtheorem{def:b20}[def:b10]{Definition}
\begin{def:b20}\upshape\label{def:b20}
The polynomial
$$f_N=\sum_{{\alpha}\in (\Gamma_f\bigcap \mathrm{Sup}(f))} a_{\alpha}x^{\alpha},$$
is called Newton's principal part of $f$.
\end{def:b20}

Thus, an arbitrary polynomial $f$ can be written as
$$f=f_{N}+f_{\overline{N}},$$
where $f_{N}$ is the Newton's principal part of $f$, and $f_{\overline{N}}$ is the remainder (it may be zero polynomial).
Especially for the polynomial in one variable, $f_N$ is the term of $f$ with lowest degree.

The definition of Newton's principal part of a polynomial is a key to this paper.
It is easy to see that for a polynomial $f$, the condition that Hessian matrix
is positive definite in Theorem 1 is exactly the requirement that the Newton's principal part $f_N$ is
a positive definite quadratic form. Hence, a normal idea is to generalize this result and hope
that a polynomial is local nonnegative based on the premise that its Newton's principal part is strictly positive
on $\mathbb{R}^n\setminus O$. Unfortunately, this conjecture is proved wrong. The counterexample is Example \ref{ex:2}. Given
$$f=x^{16}+y^{18}-x^7y^3+x^{12}y^{15}+x^4y^2-2x^3y^3+y^4x^2,$$
its Newton's principal part is
\begin{eqnarray*}
f_N&=&x^{16}+x^4y^2-2x^3y^3+y^4x^2+y^{18}\\
&=&x^{16}+y^{18}+(x^2y-y^2x)^2.
\end{eqnarray*}
$f_N$ is strictly positive on $\mathbb{R}^2\setminus O$, whereas $f$ is not local nonnegative. Moreover,
we noticed that one $F$-face,  $(x^2y-y^2x)^2$, of $f_N$ is not strictly positive definite on $\mathbb{R}^2\setminus O$.
Because it vanishes when $x=y$. The Newton's diagram of $f$ is a polygonal line in Fig. 1.

\vspace{0.5cm}
 \setlength{\unitlength}{0.1in}
\begin{picture}(20,20)
\put(12,-2){\makebox(15,2)[1] {O}}
\put(20,0){\vector(0,1){20}}
\put(20,0){\vector(1,0){20}}
\put(20,18){\circle*{0.5}   } \put(36,0){\circle*{0.5} }
\put(24,2){\circle*{0.5} }  \put(22.2,4){\circle*{0.5} }
\put(20,18){\line(1,-6){2.3}}
\put(22,4){\line(1,-1){2.2}}
\put(24,2){\line(6,-1){12}}
\put(23,3){\circle*{0.5} }
\put(27,3){\circle*{0.5} }
\put(32,15){\circle*{0.5} }
 \put(20,-4.5){\makebox(15,2)[1]{\hbox{\small Fig.1 . $f$ 的~Newton 图~$\Gamma_f$}}}
\end{picture}
\vspace{1.5 cm}

It is time to present the main result of this paper.

\newtheorem{th:2}[th:1]{Theorem}
\begin{th:2}\upshape\label{th:2}
  Let $f_N$ be the Newton's principal part of a polynomial $f\in \mathbb{R}[x_1,\cdots,x_n]$ (see Definition \ref{def:b20}).
  If every $F$-face, $(f_N)_F$, of $f_N$ satisfies
$$\forall x\in (\mathbb{R}\setminus 0)^n,\ (f_{N})_F> 0,$$ then $f$ is local nonnegative.
\end{th:2}

\section{Proof of the Main Result}

In this section, we will present the proof of the main result (Theorem \ref{th:2}). Firstly, several lemmas are developed.

\newtheorem{lem:c10}{Lemma}[section]
\begin{lem:c10}\upshape\label{lem:c10}
Given $\alpha^{(1)},\cdots,\alpha^{(m)}  \in \mathbb{N}^n$, $\beta$ can be written as a convex combination of
\ $\alpha^{(1)},\cdots,\alpha^{(m)}$, i.e., there exists nonnegative real numbers $\lambda_1,\cdots,\lambda_m$ satisfying
$$\beta=\lambda_1 \alpha^{(1)}+\cdots+\lambda_m \alpha^{(m)}, \ \lambda_1 +\cdots+\lambda_m =1.$$
Then, there exists a positive constant $k_\beta$ such that
$$\forall x\in  \mathbb{R}_{\geq 0}^n,\ x^{\alpha^{(1)}}+\cdots+x^{\alpha^{(m)}}\geq k_\beta x^{\beta}.$$
\end{lem:c10}

\begin{proof}
By generalized mean inequality \cite{HLP},
$$\forall x\in  \mathbb{R}_{\geq 0}^n,\ \lambda_1 x^{\alpha^{(1)}}+\cdots+\lambda_m x^{\alpha^{(m)}}\geq
 x^{\lambda_1 \alpha^{(1)}+\cdots+\lambda_m \alpha^{(m)}}=x^{\beta}.$$

 Let $k_\beta=1/\max \{\lambda_1, \cdots, \lambda_m \}$ . Then,
$$x^{\alpha^{(1)}}+\cdots+ x^{\alpha^{(m)}} \geq k_\beta x^{\beta}$$ holds.
\end{proof}

\newtheorem{lem:c20}[lem:c10]{Lemma}
\begin{lem:c20}[Handelman\cite{Han,AT}]\label{lem:c20}
Let $h\in \mathbb{R}[x_1,\cdots,x_n]$ be a nonzero polynomial. Then, there exists a positive integer $m$ such that
 all the coefficients of $h(h^{+}-h^{-})^m$ are nonnegative real numbers if and only if,
 $$\forall F\in \mathfrak{\mathcal{F}}(h),\ \forall x\in \mathbb{R}_{>0}^n,\  h_{F}> 0 . $$
\end{lem:c20}

\begin{remark}
All of the coefficients of $(h^{+}-h^{-})$ are nonnegative real numbers.
\end{remark}

\newtheorem{lem:c30}[lem:c10]{Lemma}
\begin{lem:c30}\label{lem:c30}
Let $h\in \mathbb{R}[x_1,\cdots,x_n]$ be a nonzero polynomial. Then, there exists a constant $\tau >0$ satisfying
$$\forall x\in \mathbb{R}_{>0}^n,\ h\geq \tau h^V$$
if and only if,
$$\forall F\in \mathfrak{\mathcal{F}}(h),\ \forall x\in \mathbb{R}_{>0}^n,\  h_{F}> 0, $$
where $h^V$ is a characteristic polynomial of vertices of $h$.
\end{lem:c30}

\begin{proof}
$\Leftarrow$: Let $V=\{v^{(1)},v^{(2)},\cdots,v^{(d)}\}$ be the set of vertices of $N(h)$, where $N(h)$ is the Newton polytope of $h$.
Then, the characteristic polynomial of vertices of $h$ is
$$
h^V=\sum_{i=1}^d x^{v(i)}.
$$
Moreover, $V$ is also the set of vertices of $N(h^+-h^-)$ because $(h^+-h^-)$ and $h$ have the same Newton polytope.
For the polynomial $p$ and $q$, the Newton polytope of their multiplication corresponds to the Minkowski sum of their polytopes[7,\ 8], that is,
$$N(pq)=N(p)+N(q).$$
Hence,
$$N(h(h^{+}-h^{-})^m)=\underbrace{N(h)+N(h)+\cdots +N(h)}_{m+1}=N((h^{+}-h^{-})^{m+1}).$$
Note that $(m+1)v^{(1)},\cdots, (m+1)v^{(d)}$ are vertices of both $N((h^{+}-h^{-})^{m+1})$ and
$N(h(h^{+}-h^{-})^m)$. Thus we have
$$(m+1)v^{(1)},\cdots, (m+1)v^{(d)}\in \mathrm{Sup}(h(h^{+}-h^{-})^m).$$

Applying Handelman's theorem (Lemma \ref{lem:c20}) to $h$, then there exists a positive integer $m$ such that
all coefficients of $h(h^{+}-h^{-})^m$ are nonnegative real numbers. There exists a positive constant $k_1$, therefore, satisfying
\begin{equation}\label{eq:c10}
\forall x\in \mathbb{R}_{>0}^n,\ h(h^{+}-h^{-})^m \geq k_1 \sum_{i=1}^d  x^{(m+1)v^{(i)}}.
\end{equation}
By H\"{o}lder's inequality, it holds that
\begin{equation}\label{eq:c20}
d^m\sum_{i=1}^d  x^{(m+1)v^{(i)}}\geq (\sum_{i=1}^d  x^{v^{(i)}})^{m+1}=(h^V)^{m+1}.
\end{equation}
\eqref{eq:c10}, together with \eqref{eq:c20} indicate that
\begin{equation}\label{eq:c30}
h\geq \frac{k_1(h^V)^{m+1}}{d^m (h^{+}-h^{-})^m}.
\end{equation}

It is well known that all the points on a convex polyhedron can be presented by convex combinations of the elements in the set of its vertices.
Hence, each term of $(h^{+}-h^{-})$ can leads to an inequality by using Lemma \ref{lem:c10}.
Add these inequalities, and then, there exists a positive constant $k$ satisfying
\begin{equation}\label{eq:c40}
\forall x\in \mathbb{R}_{>0}^n,\ (h^{+}-h^{-})\leq k h^V(x).
\end{equation}

Pick $\tau=k_1/ (k d)^m$.  Combining \eqref{eq:c30} and \eqref{eq:c40} imply that
$$h\geq \tau h^V.$$

$\Rightarrow$: Conversely, assume that there exists a $F$-face~$h_F$ and a point~$x'\in R^n_{>0}$ such that
~$h_F(x')\leq 0$. On one hand, by~Definition \ref{def:b10}, it is clearly,
$$x'\in R^n_{>0},\ (h_F)^V(x')>0.$$
On the other hand, choose a vector $\nu=(\nu_1,\cdots,\nu_n)\in \mathbb{R}^n$ such that the dot product
$\alpha \cdot \nu\ (\alpha\in \mathrm{Sup}(h))$ arrives at the maximum on the face~$F$,
which is denoted as\ $M$. Note that the dot product ~$\alpha \cdot \nu$ arrives at the same $M$  on~$\mathrm{Sup}((h_F)^V)$.

Let~$x'=(x'_1,\ldots,x'_n)$. Then,
$$h(x'_1 e^{\nu_{1}t},\ldots,x'_n e^{\nu_{n}t})=
\sum_{\alpha\in \mathrm{Sup}(h)} a_\alpha (x')^\alpha e^{(\alpha \cdot \nu)t},$$
and
$$h^V(x'_1 e^{\nu_{1}t},\ldots,x'_n e^{\nu_{n}t} )=
\sum_{\alpha\in V} a_\alpha (x')^\alpha e^{(\alpha \cdot \nu)t}. $$

Hence, we have
\begin{equation}\label{eq:c50}
\lim_{t\rightarrow \infty} e^{-Mt}h(x'_1 e^{\nu_{1}t},\ldots,x'_n e^{\nu_{n}t} )=h_F(x'),
\end{equation}
and
\begin{equation}\label{eq:c60}
\lim_{t\rightarrow \infty} e^{-Mt}h^V(x'_1 e^{\nu_{1}t},\ldots,x'_n e^{\nu_{n}t} )=(h_F)^V(x').
\end{equation}

If there is a positive constant~$\tau$ satisfying
\begin{equation}\label{eq:c70}
\forall x\in \mathbb{R}_{>0}^n,\ h\geq \tau h^V.
\end{equation}
Then,~\eqref{eq:c50} together with\eqref{eq:c60} and \eqref{eq:c70}, indicate that
\begin{equation}
h_F(x')\geq \tau (h_F)^V(x'),
\end{equation}
contradicting the assumption that~$h_F(x')\leq 0$ while $\ (h_F)^V(x')>0$.
\end{proof}

Before proving the main result, we continue discuss the polynomial of Example 1, which will illuminate the proof of
Theorem \ref{th:2}.

Consider
$$g=x^2+y^4+z^6-sxy^2z^3,$$
where $s$ is a given real number. If write $g$ as a polynomial in $x$, then its lowest degree is $1$ with coefficient $-sy^2z^3$.
Namely, the coefficient of the term in $x$ with lowest degree is indefinite. However, $g$ is still local nonnegative.
In fact, by inequality of arithmetic and geometric mean, it holds that
$$x^2+y^4+z^6-3 x^{\frac{2}{3}}y^{\frac{4}{3}}z^2\geq 0.$$
Thus $g$ can be written as
$$g=(x^2+y^4+z^6-3 x^{\frac{2}{3}}y^{\frac{4}{3}}z^2)+
(x^{\frac{1}{3}}y^{\frac{2}{3}}z)^2(3-sx^{\frac{1}{3}}y^{\frac{2}{3}}z).$$
Noticed that
$$\lim_{
(x,y,z)\rightarrow (0,0,0)} sx^{\frac{1}{3}}y^{\frac{2}{3}}z=0.$$
Hence, $g$ is local nonnegative.

\newtheorem*{thm}{Theorem 2}
\begin{thm}\upshape
  Let $f_N$ be the Newton's principal part of a polynomial $f\in \mathbb{R}[x_1,\cdots,x_n]$ (see Definition \ref{def:b20}).
  If every $F$-face, $(f_N)_F$, of $f_N$ satisfies
$$\forall x\in (\mathbb{R}\setminus 0)^n,\ (f_{N})_F> 0,$$ then $f$ is local nonnegative.
\end{thm}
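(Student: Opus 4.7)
The plan is to write $f = f_N + f_{\overline{N}}$ and to show that, close enough to the origin, $f_N(x)$ dominates $|f_{\overline{N}}(x)|$. The natural yardstick for this comparison is the positive quantity $f_N^V(|x|)$, where $f_N^V$ is the characteristic polynomial of the vertices of $N(f_N)$ in the sense of Definition \ref{def:b10} and $|x| = (|x_1|, \ldots, |x_n|)$. Note that the vertex case of the hypothesis ($F = \{v\}$, so $(f_N)_F = a_v x^v > 0$ on $(\mathbb{R}\setminus 0)^n$) forces every vertex exponent to have all components even and positive coefficient, so $f_N^V(|x|)$ is strictly positive whenever no coordinate of $x$ vanishes.

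\textbf{Step 1: quantitative lower bound for $f_N$.} Lemma \ref{lem:c30} gives a bound only on $\mathbb{R}^n_{>0}$, whereas the hypothesis and the target live on $(\mathbb{R}\setminus 0)^n$, so I would reduce orthant by orthant. For each sign vector $\epsilon \in \{-1,+1\}^n$, define $g^{\epsilon}(y) = f_N(\epsilon_1 y_1, \ldots, \epsilon_n y_n)$. Every $F$-face of $g^{\epsilon}$ is the corresponding $F$-face of $f_N$ with coefficients twisted by signs $\epsilon^{\alpha}$, hence strictly positive on $\mathbb{R}^n_{>0}$ by the hypothesis. Lemma \ref{lem:c30} applied to $g^{\epsilon}$ yields $\tau_{\epsilon} > 0$ with $g^{\epsilon}(y) \geq \tau_{\epsilon}(g^{\epsilon})^V(y) = \tau_{\epsilon}\, f_N^V(y)$ on $\mathbb{R}^n_{>0}$, the last equality holding because $f^V$ depends only on the vertex set of $N(f)$, which is the same for $f_N$ and each $g^{\epsilon}$. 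Setting $\tau = \min_{\epsilon} \tau_{\epsilon} > 0$ I obtain
\[ f_N(x) \geq \tau\, f_N^V(|x|) \quad \text{for all } x \in (\mathbb{R}\setminus 0)^n. \]

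\textbf{Step 2: upper bound for $|f_{\overline{N}}|$.} Each $\alpha \in \mathrm{Sup}(f_{\overline{N}})$ lies in $N_f$ but strictly above the Newton diagram $\Gamma_f$, so by the definition \eqref{eq:b10} there is a decomposition $\alpha = \beta + \gamma$ with $\beta \in N(f_N) = \mathrm{Conv}(V)$ and $\gamma \in \mathbb{R}^n_{\geq 0}\setminus\{0\}$. Writing $\beta$ as a convex combination of the vertices $V$ and applying Lemma \ref{lem:c10}, I get $k_{\beta} > 0$ with $f_N^V(|x|) \geq k_{\beta}\,|x|^{\beta}$, hence
\[ |x^{\alpha}| \;=\; |x|^{\beta}\,|x|^{\gamma} \;\leq\; k_{\beta}^{-1}\,|x|^{\gamma}\,f_N^V(|x|). \]
Summing over the finite set $\mathrm{Sup}(f_{\overline{N}})$ yields $|f_{\overline{N}}(x)| \leq C(x)\, f_N^V(|x|)$ with $C(x) \to 0$ as $x \to O$, because each $\gamma$ that appears is a nonzero nonnegative vector and the sum is finite.

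\textbf{Step 3: conclusion.} Choose $\delta > 0$ so that $C(x) < \tau$ whenever $\|x\| < \delta$. Then for every $x \in B(\delta, O) \cap (\mathbb{R}\setminus 0)^n$,
\[ f(x) \;\geq\; f_N(x) - |f_{\overline{N}}(x)| \;\geq\; \bigl(\tau - C(x)\bigr)\, f_N^V(|x|) \;\geq\; 0. \]
Since $(\mathbb{R}\setminus 0)^n$ is dense in $\mathbb{R}^n$ and $f$ is continuous, $f \geq 0$ throughout $B(\delta, O)$, which is the desired local nonnegativity. The main obstacle I foresee is the sign bookkeeping in Step 1, where Lemma \ref{lem:c30} must be lifted across the $2^n$ orthants using the full strength of the hypothesis on $(\mathbb{R}\setminus 0)^n$; a smaller but essential subtlety is verifying that the decomposition $\alpha = \beta + \gamma$ in Step 2 can always be arranged with $\gamma \neq 0$, which is exactly the content of $\alpha$ lying strictly above $\Gamma_f$.
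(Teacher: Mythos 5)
Your proposal is correct and is essentially the paper's own argument: the same decomposition $f=f_N+f_{\overline{N}}$, with Lemma \ref{lem:c30} supplying the lower bound $f_N\geq \tau\,(f_N)^V$ and Lemma \ref{lem:c10} turning each monomial of $f_{\overline{N}}$ into an $o\bigl((f_N)^V\bigr)$ perturbation near the origin. The two subtleties you single out are handled no more carefully in the paper itself: it reduces to $\mathbb{R}^n_{\geq 0}$ in one line by citing evenness of the vertex exponents (your orthant-by-orthant sign twist is the rigorous version of that step), and it uses the nonvanishing of $\delta$ implicitly in the limit \eqref{eq:c110} without comment, exactly the $\gamma\neq 0$ point you flag in Step 2.
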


\begin{proof}
The result obviously holds if $f=f_N$ since $f$ is global nonnegative, and so, in particular it is local nonnegative.

Next, if $f\neq f_N$, then $f$ can be written as
\begin{equation}\label{eq:c80}
\begin{split}
f=& f_N+f_{\overline{N}}\\
=&f_N+\sum_{\beta\in \mathrm{Sup}(f_{\overline{N}})} a_{\beta}x^{\beta}.
\end{split}
\end{equation}
Let T be the number of elements in the set $\mathrm{Sup}(f_{\overline{N}})$, i.e. $T=|\mathrm{Sup}(f_{\overline{N}})|$.
By hypothesis, $f_N$ is nonnegative over $\mathbb{R}^n$, this means that $f(a_1,\ldots,a_n)\geq 0$
for all $(a_1,\ldots,a_n)\in \mathbb{R}^n$, and, hence, components of each vertex must be even ( possibly be zero ).
Thus, it reduces to discussing whether the result holds in $\mathbb{R}_{\geq 0}^n$.
Consequently, assume that $x \in \mathbb{R}_{\geq 0}^n$.

Firstly, by Lemma \ref{lem:c30}, there exists a positive constant $\tau$ such that
\begin{equation}\label{eq:c90}
f_N-\tau (f_N)^V\geq 0.
\end{equation}

Secondly, consider every term $a_{\beta} x^{\beta}$ of $f_{\overline{N}}$. By \eqref{eq:b10},
there exists a monomial $x^{\hat{\beta}}$ such that
$$\hat{\beta}\in N(f_N) , \ \beta=\hat{\beta} +\delta,\ \delta \in \mathbb{R}^n_{\geq 0}.$$
Note that $N(f_N)=\mathrm{Conv}\{ \alpha\in \mathrm{Sup}(f_N)\}.$ By Lemma \ref{lem:c10}, there exists a positive
constant $k_{\hat{\beta}}$ satisfying
\begin{equation}\label{eq:c95}
(f_N)^V\geq k_{\hat{\beta}} x^{\hat{\beta}}.
\end{equation}

Moreover,
\begin{equation}\label{eq:c100}
\begin{split}
\tau (f_N)^V+f_{\overline{N}}=& \tau (f_N)^V+\sum_{\beta\in \mathrm{Sup}(f_{\overline{N}})}a_{\beta} x^{\beta}\\
=&\sum_{\beta\in \mathrm{Sup}(f_{\overline{N}})} \left(\frac{\tau}{T}(f_N)^V+a_{\beta} x^{\beta}\right)\\
=&\sum_{\beta\in \mathrm{Sup}(f_{\overline{N}})}\left ( \frac{\tau}{T}
\left((f_N)^V-k_{\hat{\beta}} x^{\hat{\beta}} \right)+
 \left (\frac{\tau k_{\hat{\beta}}}{T} x^{\hat{\beta}}+a_{\beta} x^{\beta}\right )\right ) \\
 =&\sum_{\beta\in \mathrm{Sup}(f_{\overline{N}})} \frac{\tau}{T}
\left((f_N)^V-k_{\hat{\beta}} x^{\hat{\beta}} \right)+
\sum_{\beta\in \mathrm{Sup}(f_{\overline{N}})}  x^{\hat{\beta}}
\left (\frac{\tau k_{\hat{\beta}}}{T}+a_{\beta} x^{\delta}\right ).
\end{split}
\end{equation}

This, together with \eqref{eq:c80} and \eqref{eq:c90}, yields
\begin{eqnarray*}
f&=& f_N+f_{\overline{N}}\\
&=& (f_N-\tau (f_N)^V)+(\tau (f_N)^V +f_{\overline{N}})\\
& =&(f_N-\tau (f_N)^V)+\sum_{\beta\in \mathrm{Sup}(f_{\overline{N}})} \frac{\tau}{T}
\left((f_N)^V-k_{\hat{\beta}} x^{\hat{\beta}} \right)+
\sum_{\beta\in \mathrm{Sup}(f_{\overline{N}})}  x^{\hat{\beta}}
\left (\frac{\tau k_{\hat{\beta}}}{T}+a_{\beta} x^{\delta}\right ).
\end{eqnarray*}
It is easy to see that
\begin{equation}\label{eq:c110}
\lim_{\parallel x \parallel \rightarrow 0 }
\left( \frac{\tau k_{\beta}}{T}+a_{\beta} x^{\delta} \right )= \frac{\tau k_{\beta}}{T}>0.
\end{equation}

This, together with \eqref{eq:c90} and \eqref{eq:c95}, implies that $f$ is local nonnegative.
Finally we proved the desired result.
\end{proof}
Two immediate consequences of Theorem \ref{th:2} are as follows.

\newtheorem{cor:1}{Corollary}
\begin{cor:1}\upshape
Let $f_N$ be the Newton's principal part of a polynomial $f\in \mathbb{R}[x_1,\cdots,x_n]$. If
 $f_N$ is homogenous and, moreover, positive definite ( i.e., $f_N$ is strictly positive
 over $\mathbb{R}^n\setminus O$ ), then $f$ is local nonnegative.
\end{cor:1}

\begin{corollary}\upshape\label{cor:c20}
Let $f_N$ be the Newton's principal part of a polynomial $f\in \mathbb{R}[x_1,\cdots,x_n]$,
and let $(f_N)^V$ is the characteristic polynomial of vertices of $f_N$.
If every $F$-face, $(f_N)_F$, of $f_N$ satisfies
\begin{equation}\label{eq:c120}
\forall x\in (\mathbb{R}\setminus 0)^n,\ (f_{N})_F> 0
\end{equation}
and, moreover, there is a term $x^{2d_i}_i\ (d_i>0)$ such that
\begin{equation} \label{eq:c130}
x^{2d_i}_i\in(f_N)^V, \ \forall x_i\in \mathbb{R}^n, i=1,\ldots,n.
\end{equation}
Then $O$ is an isolated singular point of the variety $V_{\mathbb{R}}(f)=\{x\in \mathbb{R}^n|\ f(x)=0 \}$.
\end{corollary}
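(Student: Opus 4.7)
My plan is to leverage the exact additive decomposition $f=A+B+C$ constructed in the proof of Theorem \ref{th:2} and to exploit condition \eqref{eq:c130} to upgrade the local nonnegativity produced there to strict positivity in a punctured neighbourhood of $O$. Two facts must be verified. First, $O$ is genuinely a singular point of $V_{\mathbb{R}}(f)$: applied to each zero-dimensional face, the $F$-face hypothesis \eqref{eq:c120} forces every vertex $v$ of $N(f_N)$ to have only even components. Under the standing assumption $f(O)=0$ (otherwise $O\notin V_{\mathbb{R}}(f)$) no vertex can equal $O$, so every vertex satisfies $|v|\geq 2$; convex combinations extend this bound to all of $N(f_N)$, and since $N_f=N(f_N)+\mathbb{R}_{\geq 0}^n$ the same bound persists throughout $\mathrm{Sup}(f)$. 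Hence $f$ has neither a constant nor a linear term, $\nabla f(O)=0$, and $O$ is a singular point of the variety.

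Second, I would show $O$ is isolated in $V_{\mathbb{R}}(f)$ by revisiting the Theorem \ref{th:2} decomposition on $\mathbb{R}_{\geq 0}^n$ in a small neighbourhood of $O$: each of
$$A=f_N-\tau (f_N)^V,\quad B=\sum_{\beta}\frac{\tau}{T}\bigl((f_N)^V-k_{\hat\beta}x^{\hat\beta}\bigr),\quad C=\sum_{\beta} x^{\hat\beta}\Bigl(\frac{\tau k_{\hat\beta}}{T}+a_\beta x^\delta\Bigr)$$
is nonnegative, the last because the bracketed factor tends to $\tau k_{\hat\beta}/T>0$ by \eqref{eq:c110}. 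Suppose toward contradiction that $f(x)=0$ for some $x\in\mathbb{R}_{\geq 0}^n\setminus\{O\}$ in this neighbourhood; then $A(x)=B(x)=C(x)=0$. Vanishing of $C$ together with the strict positivity of its bracketed factor forces $x^{\hat\beta}=0$ for every $\beta\in\mathrm{Sup}(f_{\overline N})$, whence $x^\beta=x^{\hat\beta}x^\delta=0$ and hence $f_{\overline N}(x)=0$. Combined with $f(x)=0$ this yields $f_N(x)=0$, and then $A(x)=0$ forces $(f_N)^V(x)=0$. But condition \eqref{eq:c130} delivers
$$(f_N)^V(x)\geq x_1^{2d_1}+\cdots+x_n^{2d_n},$$
which is strictly positive on $\mathbb{R}_{\geq 0}^n\setminus\{O\}$, a contradiction.

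To cover orthants other than $\mathbb{R}_{\geq 0}^n$, I would observe that both the $F$-face positivity \eqref{eq:c120} (posited on all of $(\mathbb{R}\setminus 0)^n$) and condition \eqref{eq:c130} (since $x_i^{2d_i}=(-x_i)^{2d_i}$) are invariant under the sign flips $x_i\mapsto -x_i$, so the argument above runs unchanged in every orthant after relabeling. The main technical obstacle, I expect, is precisely this reduction: the decomposition in Theorem \ref{th:2}'s proof is only literally meaningful on $\mathbb{R}_{\geq 0}^n$ since $x^{\hat\beta}$ may involve fractional exponents, so one cannot simply substitute $|x_i|$ inside the formula; instead the symmetry of the hypotheses under reflection must be used to transport the contradiction orthant-by-orthant.
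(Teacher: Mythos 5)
Your proof is correct, and since the paper gives no argument for this corollary at all (it is merely announced as an ``immediate consequence'' of Theorem \ref{th:2}), what you have written is the natural completion of what the authors leave implicit: re-enter the decomposition $f=\bigl(f_N-\tau (f_N)^V\bigr)+\sum_{\beta}\frac{\tau}{T}\bigl((f_N)^V-k_{\hat\beta}x^{\hat\beta}\bigr)+\sum_{\beta}x^{\hat\beta}\bigl(\frac{\tau k_{\hat\beta}}{T}+a_\beta x^\delta\bigr)$ from the proof of Theorem \ref{th:2}, observe that each block is nonnegative near $O$ on $\mathbb{R}^n_{\geq 0}$, and use \eqref{eq:c130} to turn the forced vanishing of $(f_N)^V$ at a putative nonzero zero of $f$ into a contradiction; the orthant-by-orthant transport via the sign-flip invariance of \eqref{eq:c120} and \eqref{eq:c130} is exactly the right way to deal with the fractional exponents in $x^{\hat\beta}$, and your even-vertex/total-degree-at-least-two argument correctly settles that $O$ is a singular (critical) zero. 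One small tightening: you need not carry $f(O)=0$ as a standing assumption, since \eqref{eq:c130} already forces it --- a nonzero constant term would give $\Gamma_f=\{O\}$, hence $f_N$ constant and $(f_N)^V=1$, so no monomial $x_i^{2d_i}$ with $d_i>0$ could occur among the vertex terms. With that remark the proof is complete as written.
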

For an example of Corollary \ref{cor:c20}, consider the polynomial in Example \ref{ex:1}.
$$f=x^2+y^4+z^6-sxy^2z^3$$
It is easy to see that for every $F$-face of $f_N$, \eqref{eq:c120}holds and so are \eqref{eq:c130}.
Then $O$ is an isolated singular point of the variety $V_{\mathbb{R}}(f)$.

\section*{References}

\end{document}